\newcommand{\RR}{\mathbb{R}}
\newcommand{\dd}{\mathrm{d}}
\theoremstyle{definition}
\newtheorem{thm}{Theorem}[section]
\newtheorem{lma}[thm]{Lemma}
\newtheorem{cor}[thm]{Corollary}
\newtheorem{rmk}[thm]{Remark}
\title[Benamou-Brenier for multi-marginal optimal transport]{A Benamou-Brenier formulation for the multi-marginal optimal transport problem with infimal convolution cost}
\author{Friedemann Krannich}
\date{}
\address{Department of Mathematics, University of Toronto, Bahen Centre, 40 St. George St., Toronto, ON, M5S 2E4, Canada}
\email{f.krannich@mail.utoronto.ca}
\thanks{The author received support from the Department of Mathematics at the University of Toronto, the Canada Research Chairs program (Grant \#CRC-2020-00289) and the Natural Sciences and Engineering Research Council of Canada Discovery Grant (Grant \#RGPIN-2020-04162 and Grant \#RGPIN-2024-06235).}
\subjclass{49Q22,90C08,35Q49,76M30}
\keywords{Least action, Wasserstein barycentre, convexity}
\begin{document}
	
\maketitle

\begin{abstract} 
	We present a dynamical version for the multi-marginal optimal transport problem with infimal convolution cost, using the theory of Wasserstein barycentres. We show, how our formulation relates to the dynamical version of the multi-marginal optimal transport problem developed by Pass and Shenfeld.
\end{abstract}
	
\section{Introduction}
The classical optimal transport problem describes the minimal cost of transporting $\mu_1\in\mathcal P_p^{\text{ac}}(\RR^d)\coloneqq\big\{\mu\in\mathcal P(\RR^d): \mu<<\mathcal L^d,\int_{\RR^d}|x|^p\ \dd\mu(x)<\infty\big\}$ to $\mu_2\in\mathcal P_p^\text{ac}(\RR^d)$ through
\begin{gather*}
	W_p(\mu_1,\mu_2)\coloneqq \Big(\inf_{\gamma\in\Pi(\mu_1,\mu_2)}\int_{\RR^d\times\RR^d} |x_1-x_2|^p\ \dd\gamma(x_1,x_2)\Big)^{\frac{1}{p}}
\end{gather*}
where the set of transport plans $\Pi(\mu_1,\mu_2)$ are all couplings $\gamma\in\mathcal P(\RR^d\times\RR^d)$ that have marginals $\mu_1$ and $\mu_2$. In this manuscript, we are considering $p\in (1,\infty)$ fixed.
For an introduction to the theory of optimal transport and further literature we refer to the books \cite{villani2009,santambrogio2015}.
While $W_p(\mu_1,\mu_2)$ is a static problem, the celebrated publications by Benamou and Brenier \cite[Prop. 1.1]{benamou2000} (for $p=2$) and Brenier \cite[Th. 2.2]{brenier2004} (for general convex costs) recast it as a dynamical problem by showing that $W_p^p(\mu_1,\mu_2)$ is equal to
\begin{equation}\label{bb}
\begin{aligned}
	&BB(\mu_1,\mu_2)\coloneqq\inf_{(\rho,v)}\Big\{\int_0^1\int_{\RR^d}|v_t|^p \dd\rho_t\ \dd t:\\ &\partial_t\rho_t+\nabla\cdot (v_t\rho_t)=0,\ \rho_0=\mu_1,\ \rho_1=\mu_2\Big\}
\end{aligned}
\end{equation}
where $\rho_t\in\mathcal P_p(\RR^d)\coloneqq\big\{\mu\in\mathcal P(\RR^d):\int_{\RR^d}|x|^p\ \dd\mu(x)<\infty\big\}$, $v_t:\RR^d\rightarrow\RR^d$ for $t\in [0,1]$ and we require that $\int_0^1\int_{\RR^d}|v_t|\ \dd\rho_t\ \dd t<\infty$. The continuity equation and initial and terminal condition are understood in the weak sense, meaning that for each test function $\varphi\in C^1_c([0,1]\times\RR^d)$ we have
\begin{gather*}
	\int_0^1\int_{\RR^d}\big(\partial_t\varphi+\nabla_x\varphi\cdot v_t\big)\ \dd\rho_t\ \dd t=\int_{\RR^d}\varphi(1,x)\ \dd\mu_2(x)-\int_{\RR^d}\varphi(0,x)\ \dd\mu_1(x).
\end{gather*}
Given $\mu_1,\dots,\mu_N\in\mathcal P_p^\text{ac}(\RR^d)$, the multi-marginal optimal transport problem (see for example \cite{ruschendorf1981,gaffke1981,kellerer1984} for early references)
\begin{gather}\label{MMOT}
	C(\mu_1,\dots,\mu_N)\coloneqq\inf_{\gamma\in\Pi(\mu_1,\dots,\mu_N)}\int_{\RR^{dN}} c(x_1,\dots,x_N)\ \dd\gamma (x_1,\dots,x_N)
\end{gather}
generalizes $W_p^p(\mu_1,\mu_2)$ in a straightforward way, where $\Pi(\mu_1,\dots\mu_N)$ is the set of couplings $\mathcal P(\RR^{dN})$ with marginals $\mu_1,\dots,\mu_N$ and $c:\RR^{dN}\rightarrow\RR$ is a cost function. Existence and uniqueness of transport maps for $C(\mu_1,\dots,\mu_N)$ with quadratic ($p=2$) infimal convolution cost \eqref{infconvcost} was first studied by Gangbo and \'Swi\k ech \cite{gangbo1998}. For an introduction to the theory of multi-marginal optimal transport, see for example the survey \cite{pass2015}.\\
Until recently, no dynamical version in the spirit of \eqref{bb} for $C(\mu_1,\dots,\mu_N)$ was known. Pass and Shenfeld \cite[Th. 2.3]{pass2025} found a dynamical version for the multi-marginal optimal transport problem: They fix a reference coupling  $\omega\in\mathcal P(\RR^{dN})$ and then show, that if $c$ is convex and $\omega$-translation-invariant, i.e. that $c(x-\xi)=c(x)$ for all $x\in\RR^{dN}$ and $\omega$-a.e. $\xi\in\RR^{dN}$ \cite[Def. 1.1]{pass2025}, then $C(\mu_1,\dots,\mu_N)$ is equal to
\begin{equation}\label{psbb}
\begin{aligned}
	&PS(\mu_1,\dots,\mu_N)\coloneqq\inf_{(\gamma,V)}\Big\{\int_0^1\int_{\RR^{dN}}c(V_t(x))\ \dd\gamma_t(x)\ \dd t:\\ &\partial_t\gamma_t+\nabla\cdot (V_t\gamma_t)=0,\ \gamma_0=\omega,\ \pi^i_{\#}\gamma_1=\mu_i\ i\in[N]\Big\}.
\end{aligned}
\end{equation}
Compared to the classical Benamou-Brenier formula \eqref{bb}, this problem does not search for a flow of measures but rather for a flow of couplings, hence $\gamma_t\in\mathcal P(\RR^{dN})$ and $V_t:\RR^{dN}\rightarrow\RR^{dN}$ for $t\in[0,1]$ with the requirement that $\int_0^1\int_{\RR^{dN}}|V_t|\ \dd\gamma_t\ \dd t<\infty$. Here, $[N]\coloneqq\{1,\dots,N\}$ and $\pi^i:\RR^{dN}\rightarrow\RR^d$ denotes the projection on the $i$-th coordinate, given by $\pi^i(x_1,\dots,x_N)=x_i$ for $i\in [N]$. For a Borel map $T:\RR^{n}\rightarrow\RR^m$ we define the pushforward $T_{\#}\alpha=\beta$ for $\alpha\in\mathcal P(\RR^{n})$ and $\beta\in\mathcal P(\RR^m)$ through
\begin{gather*}
	\alpha(T^{-1}(A))=\beta(A)
\end{gather*}
for all $A\subset\RR^m$ Borel.\\
If the cost function $c$ in \eqref{MMOT} is the infimal convolution cost
\begin{gather}\label{infconvcost}
	c(x_1,\dots,x_N)\coloneqq\inf_{z\in\RR^d}\sum_{i=1}^N|x_i-z|^p,
\end{gather}
Chiappori, McCann and Nesheim \cite[Sect. 5]{chiappori2010} and Carlier and Ekeland \cite[Prop. 3]{carlier2010} showed, that $C(\mu_1,\dots,\mu_N)$ is equal to
\begin{gather}\label{WBC}
	WB(\mu_1,\dots,\mu_N)\coloneqq\inf_{\nu\in\mathcal{P}_p(\RR^d)}\sum_{i=1}^NW_p^p(\nu,\mu_i).
\end{gather}
This problem is called the $p$-Wasserstein barycentre problem \cite{agueh2011} and we call its solution $\bar\nu$ the $p$-barycentre of $\mu_1,\dots,\mu_N$. Agueh and Carlier \cite[Prop. 3.5]{agueh2011} (for $p=2$) and Brizzi, Friesecke and Ried \cite[Theo. 1.4]{brizzi2025} (for $p\in (1,\infty)$) showed, that the $p$-barycentre is uniquely attained and that $\bar\nu\in\mathcal P_p^{\text{ac}}(\RR^d)$. \\
Using the theory of $p$-Wasserstein barycentres, we propose another dynamical formulation for $C(\mu_1,\dots,\mu_N)$, that we found simultaneously and independently from the formulation of Pass and Shenfeld: In Th. \ref{mmoteqaultommbb} we show equality of $C(\mu_1,\dots,\mu_N)$ with cost function \eqref{infconvcost} and
\begin{equation} \label{MMBB}
\begin{aligned}
	&DC(\mu_1,\dots,\mu_N)\coloneqq
	\inf_{(\rho^i,v^i)_{i\in[N]}}\Big\{\int_0^1\sum_{i=1}^N\int_{\RR^d}|v^i_t|^p\ \dd\rho^i_t\ \dd t:\\ &\partial_t\rho^i_t+\nabla\cdot (v^i_t\rho^i_t)=0,\ \rho^1_0=\dots=\rho^N_0,\ \rho^i_1=\mu_i\Big\}.
\end{aligned}
\end{equation}
In our dynamical formulation, we search for $N$ flows of probability measures, with a coupled initial constraint and fixed endpoints, hence $\rho_t^i\in\mathcal P_p(\RR^d)$, $v_t^i:\RR^d\rightarrow\RR^d$ for $t\in [0,1]$ and we require that $\int_0^1\int_{\RR^d}|v_t^i|\ \dd\rho_t^i\ \dd t<\infty$ for $i\in [N]$.
One notices that, using the insight of Benamou and Brenier \cite{benamou2000}, we can change variables $(\rho^i,v^i)\rightarrow (\rho^i,m^i)\coloneqq (\rho^i,v^i\rho^i)$ to recast $DC(\mu_1,\dots,\mu_N)$ as a convex optimization problem with linear constraints.
Our dynamical problem works without a reference coupling $\omega$ and has a natural physical interpretation for $p=2$ (see Rmk. \ref{physint}). We show in Th. \ref{psequalitythm}, how to construct a solution $(\bar\gamma, \bar V)$ to $PS(\mu_1,\dots,\mu_N)$ from a solution $(\bar \rho^i,\bar v^i)_{i\in [N]}$ to $DC(\mu_1,\dots,\mu_N)$, if the reference coupling is given through $\omega=(\text{id}\times\dots\times\text{id})_{\#}\bar\nu$.
\subsection{Plan of the manuscript}
In section 2, we show\\
$C(\mu_1,\dots,\mu_N)=DC(\mu_1,\dots,\mu_N)$, using the theory of $p$-Wasserstein barycentres. In section 3, we construct a solution to $PS(\mu_1,\dots,\mu_N)$ from a solution to\\ $DC(\mu_1,\dots,\mu_N)$. We conclude the manuscript in section 4.
\section{Dynamical multi-marginal optimal transport through $p$-Wasserstein barycentres}
For the rest of this manuscript, we consider the multi-marginal optimal transport problem $C(\mu_1,\dots,\mu_N)$ with cost function given through the infimal convolution cost \eqref{infconvcost} with $p\in (1,\infty)$ fixed.
Using \eqref{WBC}, we find a new dynamical formulation for $C(\mu_1,\dots,\mu_N)$:
\begin{thm}\label{mmoteqaultommbb}
	$C(\mu_1,\dots,\mu_N)=DC(\mu_1,\dots,\mu_N)$ holds with cost function given by \eqref{infconvcost}.
\end{thm}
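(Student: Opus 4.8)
The plan is to use the identity $C(\mu_1,\dots,\mu_N)=WB(\mu_1,\dots,\mu_N)$ recorded in \eqref{WBC}, so that the theorem reduces to proving $WB(\mu_1,\dots,\mu_N)=DC(\mu_1,\dots,\mu_N)$. The key observation is that, apart from the coupled initial constraint $\rho^1_0=\dots=\rho^N_0$, the problem $DC$ decouples into $N$ independent classical Benamou--Brenier problems, one joining the common initial measure to each $\mu_i$, so \eqref{bb} (for general $p\in(1,\infty)$, after Brenier \cite{brenier2004}) will be exactly the bridge between the static and dynamical formulations.

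\textbf{Upper bound $DC\le WB$.} I would take $\bar\nu\in\mathcal P_p^{\text{ac}}(\RR^d)$, the unique $p$-barycentre of $\mu_1,\dots,\mu_N$. For each $i\in[N]$, since $\bar\nu$ is absolutely continuous there is an optimal transport map between $\bar\nu$ and $\mu_i$ for the cost $|x_1-x_2|^p$; the displacement interpolation between $\bar\nu$ and $\mu_i$, together with its geodesic velocity field, produces a pair $(\rho^i,v^i)$ solving the continuity equation with $\rho^i_0=\bar\nu$, $\rho^i_1=\mu_i$ and $\int_0^1\int_{\RR^d}|v^i_t|^p\,\dd\rho^i_t\,\dd t=W_p^p(\bar\nu,\mu_i)$, i.e. the attainment in \eqref{bb}. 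The family $(\rho^i,v^i)_{i\in[N]}$ is admissible for $DC$ because $\rho^1_0=\dots=\rho^N_0=\bar\nu$ by construction, hence $DC(\mu_1,\dots,\mu_N)\le\sum_{i=1}^N W_p^p(\bar\nu,\mu_i)=WB(\mu_1,\dots,\mu_N)$.

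\textbf{Lower bound $WB\le DC$.} Conversely, given any admissible $(\rho^i,v^i)_{i\in[N]}$ for $DC$, set $\nu\coloneqq\rho^1_0=\dots=\rho^N_0\in\mathcal P_p(\RR^d)$. Assuming the action is finite (otherwise there is nothing to prove), each curve $t\mapsto\rho^i_t$ is absolutely continuous in $(\mathcal P_p(\RR^d),W_p)$ joining $\nu$ to $\mu_i$, with metric speed at most $\|v^i_t\|_{L^p(\rho^i_t)}$, so the lower-bound half of \eqref{bb} gives $\int_0^1\int_{\RR^d}|v^i_t|^p\,\dd\rho^i_t\,\dd t\ge W_p^p(\nu,\mu_i)$. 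Summing over $i$ and using $\nu$ as a competitor in \eqref{WBC} yields $\int_0^1\sum_{i=1}^N\int_{\RR^d}|v^i_t|^p\,\dd\rho^i_t\,\dd t\ge\sum_{i=1}^N W_p^p(\nu,\mu_i)\ge WB(\mu_1,\dots,\mu_N)$, and taking the infimum over admissible families finishes the direction.

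\textbf{Main obstacle.} The bulk of the argument is standard Wasserstein-geodesic bookkeeping; the two points requiring genuine care are (a) the existence of an optimal map, or at least a well-defined geodesic velocity field, between the $p$-barycentre $\bar\nu$ and each $\mu_i$, which is precisely where absolute continuity of $\bar\nu$ (provided by \cite{agueh2011,brizzi2025}) enters, and (b) the passage from an arbitrary solution of the continuity equation to an absolutely continuous $W_p$-curve carrying the metric-derivative bound, for which one invokes the representation/superposition theory for the continuity equation. Once these two facts are in hand, the theorem is a direct combination of \eqref{WBC} and \eqref{bb}.
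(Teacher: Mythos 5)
Your proposal is correct and follows essentially the same route as the paper: reduce to $WB(\mu_1,\dots,\mu_N)$ via \cite[Prop. 3]{carlier2010} and then apply the Benamou--Brenier identity \eqref{bb} to each term $W_p^p(\nu,\mu_i)$; the paper simply compresses your two inequalities into a single interchange of the infimum over $\nu$ with the infima over the $N$ curves. Your explicit attention to attainment via the absolutely continuous barycentre $\bar\nu$ and to the lower-bound half of \eqref{bb} is sound but not a different argument.
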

\begin{proof}
	From \cite[Prop. 3]{carlier2010} we get $C(\mu_1,\dots,\mu_N)=WB(\mu_1,\dots,\mu_N)$ and hence,
	using \eqref{bb} for each term $W_p^p(\nu,\mu_i)$, we write $WB(\mu_1,\dots,\mu_N)$ as
	\begin{align*}
		&\inf_{\nu\in\mathcal{P}_p(\RR^d)}\sum_{i=1}^NW_p^p(\nu,\mu_i)\\
		&=\inf_{\nu\in\mathcal{P}_p(\RR^d)}\sum_{i=1}^N\inf_{(\rho^i,v^i)}\Big\{\int_0^1\int_{\RR^d}|v^i_t|^p\ \dd\rho^i_t\ \dd t:\ \partial_t\rho^i_t+\nabla\cdot (v^i_t\rho^i_t)=0, \rho^i_0=\nu, \rho^i_1=\mu_i\Big\}\\
		&=\inf_{(\rho^i,v^i)_{i\in [N]}}\Big\{\int_0^1\sum_{i=1}^N\int_{\RR^d}|v^i_t|^p \dd\rho^i_t \dd t: \partial_t\rho^i_t+\nabla\cdot (v^i_t\rho^i_t)=0, \rho^1_0=\dots=\rho^N_0, \rho^i_1=\mu_i \Big\}.
	\end{align*}
\end{proof}
The dynamical version \eqref{bb} of $W_p^p(\bar\nu,\mu_i)$ allows for an explicit description of the minimizers $(\bar\rho^i,\bar v^i)_{i\in [N]}$ in $DC(\mu_1,\dots,\mu_N)$ (see \cite[Sect. 5]{santambrogio2015} for details):
Let $T^i$ be the optimal transport map in $W_p(\bar\nu,\mu_i)$ that transports $\bar{\nu}$ to $\mu_i$. For $t\in [0,1]$ and $y\in\text{supp}(\bar\nu)$, we define the map 
\begin{gather*}
	T^i_t(y)\coloneqq (1-t)y+t T^i(y),
\end{gather*}
which is invertible for all $t\in [0,1]$. Then, McCann's displacement interpolant \cite[Def. 1.1]{mccann1997} $\bar\rho_t^i\coloneqq (T^i_{t})_{\#}\bar\nu$
together with the implicitly defined vector field
\begin{gather*}
	\bar v^i_t(T^i_t(y))\coloneqq T^i(y)-y
\end{gather*}
are the minimizers in $BB(\bar\nu,\mu_i)$ for every $i\in [N]$ \cite[Prop. 5.30]{santambrogio2015} and hence $(\bar\rho^i,\bar v^i)_{i\in[N]}$ are the minimizers in $DC(\mu_1,\dots,\mu_N)$.\\
We define $c_p(x)\coloneqq |x|^p$ and for a function $\psi$ we denote
\begin{gather*}
	\psi^{c_p}(x)\coloneqq\inf_{y\in\RR^d}|x-y|^p-\psi(y).
\end{gather*}
The next Lemma is a straightforward extension of \cite[Prop. 3.8]{agueh2011}, but we give its proof for completeness.
\begin{lma}\label{velocitysumlma}
	We have 
	\begin{gather*}
		\sum_{i=1}^N \nabla c_p(T^i(y)-y)=0
	\end{gather*}
	for a.e. $y\in\text{supp}(\bar \nu)$.
\end{lma}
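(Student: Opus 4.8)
The plan is to derive the identity from the first-order optimality condition (Euler--Lagrange equation) for the barycentre $\bar\nu$ in the problem $WB(\mu_1,\dots,\mu_N)=\inf_{\nu}\sum_{i=1}^N W_p^p(\nu,\mu_i)$. The heuristic is clear: if $\bar\nu$ minimizes $\nu\mapsto\sum_i W_p^p(\nu,\mu_i)$, then the ``gradient'' of this functional must vanish at $\bar\nu$, and by Brenier--McCann-type differentiability of $\nu\mapsto W_p^p(\nu,\mu_i)$ the gradient of the $i$-th term at $\bar\nu$, evaluated at $y$, is $\nabla c_p(y-T^i(y))=\nabla c_p(-(T^i(y)-y))$, up to a sign. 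Summing and using that $c_p(x)=|x|^p$ is even (so $\nabla c_p$ is odd, and the sign is immaterial once all $N$ terms are added) gives the claim. The cited source \cite[Prop. 3.8]{agueh2011} is exactly this statement for $p=2$, where $\nabla c_2(x)=2x$, so there the identity reads $\sum_i(T^i(y)-y)=0$; our task is to run the same argument keeping $c_p$ general.

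The key steps, in order, would be as follows. First, fix a point $y_0\in\mathrm{supp}(\bar\nu)$ that is a Lebesgue point of all the maps $T^i$ (a.e. $y_0$ works since each $T^i$ is the gradient of a convex-type $c_p$-potential and hence differentiable a.e., and $\bar\nu\in\mathcal P_p^{\mathrm{ac}}(\RR^d)$ so ``$\bar\nu$-a.e.''\ and ``Lebesgue-a.e.\ on $\mathrm{supp}(\bar\nu)$'' coincide). Second, perturb $\bar\nu$ by an infinitesimal translation of mass near $y_0$: concretely, for a small vector $h\in\RR^d$ and small $\varepsilon>0$ consider $\nu_\varepsilon\coloneqq (S_\varepsilon)_\#\bar\nu$ where $S_\varepsilon$ moves a neighbourhood of $y_0$ by $\varepsilon h$ and is the identity elsewhere (a standard ``needle'' variation as in \cite[Sect. 7.2]{santambrogio2015} or the proof in \cite{agueh2011}). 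Third, estimate $W_p^p(\nu_\varepsilon,\mu_i)$: using $T^i$ as a (near-optimal, for $\varepsilon$ small) competitor transport map from $\nu_\varepsilon$ to $\mu_i$, one gets $W_p^p(\nu_\varepsilon,\mu_i)\le W_p^p(\bar\nu,\mu_i)+\varepsilon\,\langle\nabla c_p(y_0-T^i(y_0)),h\rangle\,\bar\nu(B)+o(\varepsilon)$, where $B$ is the perturbed neighbourhood and $\langle\cdot,\cdot\rangle$ the Euclidean inner product, and by optimality of $T^i$ in the other direction one gets a matching lower bound, so the one-sided derivative of $W_p^p(\cdot,\mu_i)$ along the variation is $\langle\nabla c_p(y_0-T^i(y_0)),h\rangle\,\bar\nu(B)$. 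Fourth, sum over $i$: since $\bar\nu$ minimizes the sum, the derivative of $\sum_i W_p^p(\nu_\varepsilon,\mu_i)$ at $\varepsilon=0^+$ must be $\ge 0$ for every $h$; replacing $h$ by $-h$ gives $\le 0$; hence $\big\langle \sum_{i=1}^N\nabla c_p(y_0-T^i(y_0)),\,h\big\rangle=0$ for all $h$, so $\sum_i\nabla c_p(y_0-T^i(y_0))=0$. Finally, since $c_p$ is even, $\nabla c_p$ is odd, so $\nabla c_p(y_0-T^i(y_0))=-\nabla c_p(T^i(y_0)-y_0)$, and the identity $\sum_i\nabla c_p(T^i(y_0)-y_0)=0$ follows.

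The main obstacle I expect is making the perturbation argument rigorous at the level of the derivative of $W_p^p(\cdot,\mu_i)$ rather than just formally: one needs (i) that $T^i$ really is an admissible competitor from $\nu_\varepsilon$ (it always is, since it still pushes $\nu_\varepsilon$-mass onto $\mu_i$ up to the small moved piece — actually one must slightly modify $T^i$ near $y_0$ so that $(T^i)_\#\nu_\varepsilon=\mu_i$, which only costs $o(\varepsilon)$ if done carefully), and (ii) the matching lower bound, which uses that the optimal map from $\bar\nu$ is $c_p$-cyclically monotone and a first-order expansion of $c_p$, exploiting that $y_0$ is a differentiability point. An alternative that sidesteps the needle variation is to invoke the known subdifferential formula for $\nu\mapsto W_p^p(\nu,\mu_i)$ on Wasserstein space (the first variation is $-\psi^i$ up to constants, where $\psi^i$ is the Kantorovich potential, and its Wasserstein gradient is $\nabla\psi^i(y)=\nabla c_p(y-T^i(y))$ by the envelope/Brenier relation; see \cite[Sect. 7.2, 8.4]{santambrogio2015}), and then use that the Wasserstein gradient of a minimized sum vanishes $\bar\nu$-a.e.; this is cleaner but uses heavier machinery. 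Either way, the essential point is purely the evenness of $c_p$ plus stationarity of the barycentre, and the proof is short once the first-variation formula is in hand.
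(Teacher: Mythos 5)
Your proposal is sound, but it takes a genuinely different route from the paper. The paper does not perturb the barycentre at all: it invokes the dual of the multi-marginal problem from Brizzi--Friesecke--Ried, whose maximizers have the form $\bar\varphi_i=\psi_i^{c_p}$ with $\sum_{i=1}^N\psi_i=0$ on $\text{supp}(\bar\nu)$; it then observes that $\sum_i\psi_i^{c_pc_p}\ge\sum_i\psi_i=0$ with equality a.e.\ on $\text{supp}(\bar\nu)$, so this sum attains its minimum at a.e.\ point of the support, and differentiating there (Gangbo--McCann a.e.\ differentiability plus $\nabla\psi_i^{c_pc_p}(y)=-\nabla c_p(T^i(y)-y)$) gives the claim. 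Your needle-variation argument replaces that duality input by the Euler--Lagrange condition for $WB(\mu_1,\dots,\mu_N)$ directly, which is more elementary and self-contained but carries the technical burden you identify. One simplification you are missing: you do not need the matching lower bound, which you single out as the main obstacle. Using the (generally suboptimal) coupling $(S_\varepsilon\times T^i)_{\#}\bar\nu$ between $\nu_\varepsilon$ and $\mu_i$ gives the upper bound $W_p^p(\nu_\varepsilon,\mu_i)\le W_p^p(\bar\nu,\mu_i)+\varepsilon\int_B\langle\nabla c_p(y-T^i(y)),h\rangle\ \dd\bar\nu(y)+o(\varepsilon)$ without any modification of $T^i$ (the domination needed for the Taylor expansion follows from $|y-T^i(y)|^{p-1}\in L^1(\bar\nu)$ by H\"older); summing over $i$ and using only $\sum_iW_p^p(\nu_\varepsilon,\mu_i)\ge\sum_iW_p^p(\bar\nu,\mu_i)$ already yields $\int_B\big\langle\sum_{i=1}^N\nabla c_p(y-T^i(y)),h\big\rangle\ \dd\bar\nu(y)\ge0$ for every $h$ and every $B$, which with $h\mapsto-h$, a Lebesgue-point argument, and the oddness of $\nabla c_p$ is exactly the lemma. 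In short: your route works and avoids the multi-marginal duality machinery, at the cost of redoing the first-variation computation; the paper's route is shorter given the cited dual characterization.
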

\begin{proof}
	$C(\mu_1,\dots,\mu_N)$ has the dual problem \cite[Theo. 4.1]{brizzi2025}
	\begin{gather*}
		\sup_{(\varphi_i)_{i\in [N]}}\Big\{\sum_{i=1}^N\int_{\RR^d}\varphi_i\ \dd\mu_i:\ \varphi_i\in L^1(\mu_i),\ \varphi_1(x_1)+\dots+\varphi_N(x_N)\leq c(x_1,\dots,x_N)\Big\}
	\end{gather*}
	with maximizers $(\bar\varphi_i)_{i\in [N]}$ given through $\bar\varphi_i=\psi^{c_p}_i$ with $\sum_{i=1}^N\psi_i(y)=0$ for\\ $y\in\text{supp}(\bar\nu)$. We have
	\begin{gather*}
		W_p^p(\bar\nu,\mu_i)=\int_{\RR^d}\psi_i(y)\ \dd\bar\nu(y)+\int_{\RR^d}\psi^{c_p}_i(x)\ \dd\mu_i(x)
	\end{gather*}
	and therefore we have for a.e. $(y,x)=(y,T^i(y))\in\text{supp}(\bar\gamma_i)$ ($\bar\gamma_i$ is the optimal coupling in $W_p(\bar\nu,\mu_i)$)
	\begin{gather*}
		\psi^{c_pc_p}_i(y)\geq\psi_i(y)= c_p(x-y)-\psi^{c_p}_i(x)\geq\psi^{c_pc_p}_i(y).
	\end{gather*}
	Therefore, we get
	\begin{gather*}
		\sum_{i=1}^N\psi^{c_pc_p}_i(y)\geq\sum_{i=1}^N\psi_i(y)= 0
	\end{gather*}
	with equality a.e. $y\in\text{supp}(\bar\nu)$. Gangbo and McCann \cite[Th. 3.3]{gangbo1996} showed $\bar\nu$-a.e. differentiability of $\psi_i$, hence we get for a.e. $y\in\text{supp}(\bar\nu)$ that
	\begin{gather*}
		0=\sum_{i=1}^N\nabla\psi^{c_pc_p}_i(y)=\sum_{i=1}^N-\nabla c_p(T^i(y)-y).
	\end{gather*}
\end{proof}
\begin{cor}\label{velocitycor}
	Let $(\bar\rho^i,\bar v^i)_{i\in [N]}$ be the optimizers for $DC(\mu_1,\dots,\mu_N)$ with $p=2$. Let $\bar m^i\coloneqq \bar v^i\bar\rho^i$ be the momentum variables, then we have for a.e. $y\in\text{supp}(\bar\nu)$
	\begin{gather*}
		\sum_{i=1}^N\bar v^i_0(y)=0=\sum_{i=1}^N\bar m^i_0(y).
	\end{gather*}
\end{cor}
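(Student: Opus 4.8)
The plan is to read off the claim from the explicit description of the minimizers of $DC(\mu_1,\dots,\mu_N)$ together with Lemma \ref{velocitysumlma} specialized to $p=2$. Recall from the discussion following Theorem \ref{mmoteqaultommbb} that the optimizers are $\bar\rho^i_t=(T^i_t)_{\#}\bar\nu$ with $T^i_t(y)=(1-t)y+tT^i(y)$, together with the vector field determined by $\bar v^i_t(T^i_t(y))=T^i(y)-y$, where $\bar\nu$ is the $p$-barycentre of $\mu_1,\dots,\mu_N$ and $T^i$ is the optimal transport map from $\bar\nu$ to $\mu_i$ in $W_p(\bar\nu,\mu_i)$. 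Evaluating at $t=0$, where $T^i_0=\text{id}$ and hence $\bar\rho^i_0=(T^i_0)_{\#}\bar\nu=\bar\nu$, gives $\bar v^i_0(y)=T^i(y)-y$ for a.e. $y\in\text{supp}(\bar\nu)$.

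First I would apply Lemma \ref{velocitysumlma} with the quadratic cost $c_2(x)=|x|^2$, for which $\nabla c_2(x)=2x$. The Lemma then reads $\sum_{i=1}^N 2\,(T^i(y)-y)=0$ for a.e. $y\in\text{supp}(\bar\nu)$, so that $\sum_{i=1}^N\bar v^i_0(y)=\sum_{i=1}^N(T^i(y)-y)=0$ a.e., which is the first claimed identity.

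For the second identity I would use that the coupled initial constraint $\rho^1_0=\dots=\rho^N_0$ in $DC(\mu_1,\dots,\mu_N)$ forces $\bar\rho^i_0=\bar\nu$ for every $i\in[N]$ — this is precisely the common measure $\nu$ over which the infimum in the proof of Theorem \ref{mmoteqaultommbb} is taken, attained at the barycentre. Consequently the momentum variables satisfy $\bar m^i_0=\bar v^i_0\,\bar\rho^i_0=\bar v^i_0\,\bar\nu$, and summing over $i$ yields $\sum_{i=1}^N\bar m^i_0=\big(\sum_{i=1}^N\bar v^i_0\big)\bar\nu=0$.

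There is essentially no obstacle here: the corollary is a direct consequence of Lemma \ref{velocitysumlma}. The only point that deserves a word of care is the passage from the a.e.\ identity for the maps $T^i$ to a.e.\ identities for $\bar v^i_0$ and $\bar m^i_0$, which is immediate once one records $\bar v^i_0=T^i-\text{id}$ $\bar\nu$-a.e.\ and $\bar\rho^i_0=\bar\nu$.
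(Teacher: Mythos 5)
Your proof is correct and follows essentially the same route as the paper: evaluate the explicit minimizers at $t=0$ to get $\bar v^i_0 = T^i - \mathrm{id}$ and $\bar\rho^i_0 = \bar\nu$, invoke Lemma \ref{velocitysumlma} with $c_2$, and multiply by $\bar\nu$ for the momentum identity. You are in fact slightly more careful than the paper about the factor of $2$ in $\nabla c_2(x)=2x$, which the paper's displayed chain of equalities silently absorbs.
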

\begin{proof}
	We have
	\begin{gather*}
		\sum_{i=1}^N\bar v^i_0(y)=\sum_{i=1}^NT^i(y)-y=\sum_{i=1}^N\nabla c_2(T^i(y)-y)=0
	\end{gather*}
	due to Lemma \ref{velocitysumlma}. Noticing that $\bar m^i_0(y)=\bar v^i_0(y)\bar \rho^i_0(y)=\bar v^i_0(y)\bar\nu (y)$, the second equality follows.
\end{proof}
\begin{rmk}\label{physint}
	Our dynamical multi-marginal optimal transport problem\\ $DC(\mu_1,\dots,\mu_N)$ is inspired  by a physical interpretation for $p=2$, that is based on Corollary \ref{velocitycor}: Let $N=3$ and consider a table with 3 holes randomly spaced over the table. Through each hole hangs a rope, having a ball with unit weight on its lower end, each ball representing a measure $\mu_i$. The upper ends of the ropes are all connected to another ball with unit weight that lays on the table. The three balls that are hanging pull the ball on the table around, until equilibrium is reached. Then the ball on the table represents the $2$-barycentre $\bar\nu$ and the velocities of the three ropes at the $2$-barycentre sum to zero, causing the ball on the table to stop moving.
\end{rmk}

\section{Relation between $PS(\mu_1,\dots,\mu_N)$ and $DC(\mu_1,\dots,\mu_N)$}
Next we show, how $PS(\mu_1,\dots,\mu_N)$ and $DC(\mu_1,\dots,\mu_N)$ are related.
\begin{thm}\label{psequalitythm}
	Let $\omega=(\text{id}\times\dots\times\text{id})_{\#}\bar\nu\in\mathcal P(\RR^{dN})$, then for the cost function \eqref{infconvcost} we have $PS(\mu_1,\dots,\mu_N)=DC(\mu_1,\dots,\mu_N)$ and we can construct minimizers $(\bar\gamma,\bar V)$ for $PS(\mu_1,\dots,\mu_N)$ from minimizers $(\bar\rho^i,\bar v^i)_{i\in [N]}$ for $DC(\mu_1,\dots,\mu_N)$.
\end{thm}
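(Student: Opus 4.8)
The plan is to derive the equality $PS=DC$ by combining the identity $C=PS$ of Pass and Shenfeld with Theorem \ref{mmoteqaultommbb}, and then to produce an explicit minimizer for $PS(\mu_1,\dots,\mu_N)$ out of the displacement interpolants that minimize $DC(\mu_1,\dots,\mu_N)$.

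First I would verify that the infimal convolution cost \eqref{infconvcost} together with $\omega=(\text{id}\times\dots\times\text{id})_{\#}\bar\nu$ satisfies the hypotheses of \cite[Th. 2.3]{pass2025}. Convexity of $c$ is immediate: $(x,z)\mapsto\sum_{i=1}^N|x_i-z|^p$ is jointly convex on $\RR^{dN}\times\RR^d$ for $p\in(1,\infty)$, being a sum of convex functions post-composed with linear maps, and the partial infimum over $z$ of a jointly convex function is convex. For $\omega$-translation-invariance, note that $\omega$ is concentrated on the diagonal $\{(y,\dots,y):y\in\text{supp}(\bar\nu)\}$, so for $\omega$-a.e. $\xi=(\xi_0,\dots,\xi_0)$ and every $x\in\RR^{dN}$ the substitution $z\mapsto z+\xi_0$ gives $c(x-\xi)=\inf_{z\in\RR^d}\sum_{i=1}^N|x_i-\xi_0-z|^p=c(x)$. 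Hence \cite[Th. 2.3]{pass2025} yields $C(\mu_1,\dots,\mu_N)=PS(\mu_1,\dots,\mu_N)$, which together with Theorem \ref{mmoteqaultommbb} gives $PS(\mu_1,\dots,\mu_N)=DC(\mu_1,\dots,\mu_N)$.

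For the construction of minimizers, I would take the minimizers $(\bar\rho^i,\bar v^i)_{i\in[N]}$ of $DC(\mu_1,\dots,\mu_N)$ described after Theorem \ref{mmoteqaultommbb}, set $\mathbf{T}_t\coloneqq(T^1_t,\dots,T^N_t)\colon\RR^d\to\RR^{dN}$ and define
\begin{gather*}
	\bar\gamma_t\coloneqq(\mathbf{T}_t)_{\#}\bar\nu,\qquad \bar V_t(x_1,\dots,x_N)\coloneqq(\bar v^1_t(x_1),\dots,\bar v^N_t(x_N)).
\end{gather*}
Then $\bar\gamma_0=\omega$ since $\mathbf{T}_0=\text{id}\times\dots\times\text{id}$, and $\pi^i_{\#}\bar\gamma_1=(T^i)_{\#}\bar\nu=\mu_i$ since $\pi^i\circ\mathbf{T}_1=T^i$. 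The curve $t\mapsto\mathbf{T}_t(y)$ is affine with $\tfrac{\dd}{\dd t}\mathbf{T}_t(y)=(T^1(y)-y,\dots,T^N(y)-y)=\bar V_t(\mathbf{T}_t(y))$ for $\bar\nu$-a.e. $y$, so testing against $\varphi\in C^1_c([0,1]\times\RR^{dN})$ and differentiating $t\mapsto\int\varphi(t,\mathbf{T}_t(y))\ \dd\bar\nu(y)$ one verifies that $(\bar\gamma,\bar V)$ solves the weak continuity equation; the integrability $\int_0^1\int|\bar V_t|\ \dd\bar\gamma_t\ \dd t<\infty$ follows from $T^i\in L^p(\bar\nu)$. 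Thus $(\bar\gamma,\bar V)$ is admissible for $PS(\mu_1,\dots,\mu_N)$, and its cost is
\begin{gather*}
	\int_0^1\int_{\RR^{dN}}c(\bar V_t(x))\ \dd\bar\gamma_t(x)\ \dd t=\int_0^1\int_{\RR^d}\inf_{z\in\RR^d}\sum_{i=1}^N|T^i(y)-y-z|^p\ \dd\bar\nu(y)\ \dd t.
\end{gather*}
For a.e. $y\in\text{supp}(\bar\nu)$ the convex, $C^1$ map $z\mapsto\sum_{i=1}^N c_p(T^i(y)-y-z)$ has vanishing gradient at $z=0$ by Lemma \ref{velocitysumlma}, so the inner infimum equals $\sum_{i=1}^N|T^i(y)-y|^p$. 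Hence the cost of $(\bar\gamma,\bar V)$ equals $\sum_{i=1}^NW_p^p(\bar\nu,\mu_i)=WB(\mu_1,\dots,\mu_N)=PS(\mu_1,\dots,\mu_N)$, so $(\bar\gamma,\bar V)$ is a minimizer.

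I expect the main obstacle to be the rigorous verification of the weak continuity equation for $(\bar\gamma,\bar V)$ — in particular that $\bar V_t$ is well defined $\bar\gamma_t$-a.e. for a.e. $t$, which relies on the non-crossing property of the interpolating maps $T^i_t$ on $\text{supp}(\bar\nu)$, cf. \cite[Sect. 5]{santambrogio2015} — together with the identification of $z=0$ as the minimizer of the infimal convolution along the barycentric flow, where the strict convexity of $c_p$ for $p\in(1,\infty)$ and Lemma \ref{velocitysumlma} are the crucial inputs.
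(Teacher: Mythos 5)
Your proposal is correct and follows essentially the same route as the paper: $\omega$-translation-invariance plus \cite[Th. 2.3]{pass2025} and Theorem \ref{mmoteqaultommbb} give $PS=DC$, and the product interpolant $(T^1_t,\dots,T^N_t)_{\#}\bar\nu$ with the componentwise velocity field, whose cost collapses to $\sum_{i=1}^N|T^i(y)-y|^p$ because Lemma \ref{velocitysumlma} identifies $z=0$ as the minimizer of the inner infimum, yields the minimizer of $PS$. The only cosmetic difference is that you verify translation-invariance by the direct substitution $z\mapsto z+\xi_0$ on the diagonal, whereas the paper deduces it from the first-order characterization \eqref{metbaroptcond} of the minimizing $\bar z$; both are valid.
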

\begin{proof}
	The minimizer in $c(x_1,\dots,x_N)$ is the unique $\bar z\in\RR^d$ characterized through the condition
	\begin{gather}\label{metbaroptcond}
		0=\sum_{i=1}^N\nabla c_p(x_i-\bar z).
	\end{gather}
	Hence, if $\bar z$ is the minimizer for $c(x_1,\dots,x_N)$ then $\bar z-\xi$ is the minimizer for $c(x_1-\xi,\dots,x_N-\xi)$,
	which shows that $c$ is $\omega$-translation-invariant, illustrating \cite[Ex. 1.4]{pass2025}. Hence, \cite[Th. 2.3]{pass2025} gives $PS(\mu_1,\dots,\mu_N)=C(\mu_1,\dots,\mu_N)$ and Th. \ref{mmoteqaultommbb} gives $C(\mu_1,\dots,\mu_N)=DC(\mu_1,\dots,\mu_N)$.\\
	We use $(\bar\rho^i,\bar v^i)$ and the interpolations $T^i_t$ defined in section 2, recalling that\\ $(\bar\rho^i,\bar v^i)_{i\in [N]}$ constructed in this way minimize $DC(\mu_1,\dots,\mu_N)$. We define the interpolant
	\begin{gather*}
		S_t(x)\coloneqq (T^1_t(x_1),\dots,T^N_t(x_N))
	\end{gather*}
	where $(x_1,\dots,x_N)=x\in\text{supp}(\omega)$. We consider the time-dependent coupling $\gamma_t\coloneqq (S_t)_{\#}\omega$
	and we define implicitly for $x\in\text{supp}(\omega)$
	\begin{gather*}
		V_t(S_t(x))\coloneqq (\bar v^1_t(T^1_t(x_1)),\dots,\bar v^N_t(T^N_t(x_N)))
	\end{gather*}
	so that
	\begin{gather*}
		\begin{cases}
			\partial_tS_t(x)=V_t(S_t(x))\\
			S_0(x)=x
		\end{cases}
	\end{gather*}
	hold.
	Then by construction $(\gamma,V)$ fulfill the continuity equation and\\ $\pi^i_{\#}\gamma_1=(T^i_1)_{\#}\bar\nu=\mu_i$.
	Hence,
	\begin{align*}
		&\int_{\RR^{dN}}c(V_t(x))\ \dd\gamma_t(x)\\
		&=
		\int_{\RR^{d}}\inf_{z\in\RR^d}\sum_{i=1}^N|\bar v^i_t(T^i_t(y))-z|^p\ \dd\bar\nu(y)\\
		&=\int_{\RR^{d}}\inf_{z\in\RR^d}\sum_{i=1}^N|T^i(y)-y-z|^p\ \dd\bar\nu(y)
	\end{align*}
	and by Lemma \ref{velocitysumlma} and \eqref{metbaroptcond} we get that for a.e. $y\in\text{supp}(\bar\nu)$
	\begin{gather*}
		\inf_{z\in\RR^d}\sum_{i=1}^N|T^i(y)-y-z|^p=\sum_{i=1}^N|T^i(y)-y|^p.
	\end{gather*}
	Therefore,
	\begin{align*}
		PS(\mu_1,\dots,\mu_N)&\leq\int_0^1\int_{\RR^{dN}}c(V_t(x))\ \dd\gamma_t(x)\ \dd t\\
		&=\int_0^1\int_{\RR^{d}}\inf_{z\in\RR^d}\sum_{i=1}^N|T^i(y)-y-z|^p\ \dd\bar\nu(y)\ \dd t\\
		&=\int_0^1\int_{\RR^{d}}\sum_{i=1}^N|T^i(y)-y|^p\ \dd\bar\nu(y)\ \dd t\\
		&=\int_0^1\sum_{i=1}^N\int_{\RR^{d}}|\bar v^i_t|^p\ \dd\bar \rho^i_t\ \dd t\\
		&=DC(\mu_1,\dots,\mu_N)
	\end{align*}
	and hence $(\gamma,V)$ are minimizers for $PS(\mu_1,\dots,\mu_N)$.
\end{proof}
\section{Conclusion}
In this manuscript, we present a new dynamical formulation $DC(\mu_1,\dots,\mu_N)$ for the multi-marginal optimal transport problem.
Compared to the dynamical formulation $PS(\mu_1,\dots,\mu_N)$ by Pass and Shenfeld \cite{pass2025}, our formulation does not need a reference coupling $\omega$. However, $PS(\mu_1,\dots,\mu_N)$ allows for a more general class of cost functions. Especially Th. \ref{psequalitythm} only works for the infimal convolution cost \eqref{infconvcost}, as it needs the specific characterization \eqref{metbaroptcond} of the minimizer $\bar z$ in the cost function.\\
One of the applications of the Benamou-Brenier formula \eqref{bb} was, that it allowed for new algorithms for the computation of $W_p(\mu_1,\mu_2)$. Pass and Shenfeld \cite[Sect. 4]{pass2025} also use their dynamical formulation for numerical computations. We conjecture, that $DC(\mu_1,\dots,\mu_N)$ gives rise to more efficient algorithms, as it allows to solve a de-coupled system of $N$ continuity equations on $\RR^d$ with coupled initial constraints, while a numerical version of $PS(\mu_1,\dots,\mu_N)$ solves for one continuity equation on $\RR^{dN}$.
\section*{Conflicts of interest}

The author does not work for, advises, owns shares in, or receives funds from any organization
that could benefit from this article, and has declared no affiliations other than their research
organizations.

\section*{Acknowledgements}

The author is grateful to Robert McCann and Adrian Nachman for their supervision and ongoing support while writing this manuscript.

\bibliographystyle{plain}
\bibliography{bibliography}

\end{document}